\documentclass[conference]{IEEEtran}
\usepackage{cite}

%
\ifCLASSINFOpdf
\usepackage[pdftex]{graphicx}
\DeclareGraphicsExtensions{.pdf,.jpeg,.png}
\else
\fi
%
%

%
\usepackage[cmex10]{amsmath}
\usepackage[font=footnotesize]{subfig}

%
\usepackage{fixltx2e}
%
\usepackage{url}

\usepackage{caption}
\usepackage[ruled,vlined, linesnumbered, algo2e]{algorithm2e}
\usepackage{algpseudocode}
\usepackage{float}
\usepackage{amsthm}
\usepackage{booktabs}
\usepackage{multirow}
\pagestyle{empty}
\usepackage[first=0,last=9]{lcg}

\usepackage{color, colortbl}
\usepackage{threeparttable}
\usepackage{algpseudocode}
\usepackage[autostyle]{csquotes}

\usepackage{supertabular}
\usepackage[hyperfootnotes=false]{hyperref}
\usepackage[hyperfootnotes=false]{hyperref}
\definecolor{mydarkblue}{rgb}{0,0.08,0.45}
\hypersetup{
	colorlinks=true,
	citecolor=mydarkblue,
	linkcolor=Orange,
	urlcolor=Blue}
\definecolor{LightCyan}{rgb}{0.88,1,1}
\definecolor{Gray2}{gray}{0.95}

\usepackage[usenames,dvipsnames]{xcolor}
\usepackage{pifont}

\usepackage{subfig}

\newcommand{\argmini}[1]{\underset{#1}{\operatorname{argmin}}\;}
\newcommand{\emp}[1]{\textbf{\color{blue}#1}\;}
\newtheorem{theoremx}{Theorem}

\begin{document}


\title{Fast Parallel Randomized Algorithm for Nonnegative Matrix Factorization with KL Divergence for Large Sparse Datasets}

\author{\IEEEauthorblockN{\ }
	\IEEEauthorblockA{
	}}
	
	
	%
	\author{\IEEEauthorblockN{
			Duy Khuong Nguyen,
			Tu Bao Ho
		}
	}

%

\maketitle

\begin{abstract}
Nonnegative Matrix Factorization (NMF) with Kullback-Leibler Divergence~(NMF-KL) is one of the most significant NMF problems and equivalent to Probabilistic Latent Semantic Indexing~(PLSI), which has been successfully applied in many applications. For sparse count data, a Poisson distribution and KL divergence provide sparse models and sparse representation, which describe the random variation better than a normal distribution and Frobenius norm. Specially, sparse models provide more concise understanding of the appearance of attributes over latent components, while sparse representation provides concise interpretability of the contribution of  latent components over instances. However, minimizing NMF with KL divergence is much more difficult than minimizing NMF with Frobenius norm; and sparse models, sparse representation and fast algorithms for large sparse datasets are still challenges for NMF with KL divergence. In this paper, we propose a fast parallel randomized coordinate descent algorithm having fast convergence for large sparse datasets to archive sparse models and sparse representation. The proposed algorithm's experimental results overperform the current studies' ones in this problem. 

\keywords{\ Nonnegative Matrix Factorization, Kullback-Leibler Divergence, Sparse Models, and Sparse Representation}
\end{abstract}

\section{Introduction}
The development of technology has been generating big datasets of count sparse data such as documents and social network data, which requires fast effective algorithms to manage this huge amount of information.  One of these tools is nonnegative matrix factorization~(NMF) with KL divergence, which is proved to be equivalent with Latent Semantic Indexing~(PLSI)~\cite{Ding2006a}. 

NMF is a powerful linear technique to reduce dimension and to extract latent topics, which can be readily interpreted to explain phenomenon in science~\cite{Lee1999,Gillis2014The,Sotiras2015finding}. NMF makes post-processing algorithms such as classification and information retrieval faster and more effective.  In addition, latent factors extracted by NMF can be more concisely interpreted than other linear methods such as PCA and ICA~\cite{Sotiras2015finding}. In addition, NMF is flexible with numerous divergences to adapt a large number of real applications~\cite{Wang2013,Zhang2011a}. 

For sparse count data, NMF with KL divergence and a Poisson distribution may provide sparse models and sparse representation describing better the random variation rather than NMF with Frobenius norm and a normal distribution~\cite{Sra2008non}. For example, the appearance of words over latent topics and of topics over documents should be sparse. However, achieving sparse models and sparse representation is still a major challenge because minimizing NMF with KL divergence is much more difficult than NMF with Frobenius norm~\cite{Kuang2015}.

In the NMF-KL problem, a given nonnegative data matrix $V \in \mathcal{R}_+^{n\times m}$ must be factorized into a product of two nonnegative matrices, namely a latent component matrix $W \in \mathcal{R}_+^{r\times n}$ and a representation matrix $F \in \mathcal{R}_+^{r\times m}$, where $n$ is the dimension of a data instance, $m$ is the number of data instances, and $r$ is the number of latent components or latent factors. The quality of this factorization is controlled by the objective function with KL divergence as follows:
{\small
\begin{equation}
D(V\|W^TF) = \sum\limits_{i=1}^{n} \sum\limits_{j=1}^{m} (V_{ij}\log{\frac{V_{ij}}{(W^TF)_{ij}}} - V_{ij} + (W^TF)_{ij})
\end{equation}
}

In the general form of $L_1$ and $L_2$ regularization variants, the objective function is written as follows:

{\small
\begin{equation}
D(V\|W^TF) + \frac{\alpha_1}{2} \|W\|^2_2 + \frac{\alpha_2}{2} \|F\|^2_2 + \beta_1 \|W\|_1 + \beta_2 \|F\|_1
\end{equation}
}

NMF with KL divergence has been widely applied in many applications for dense datasets. For example, spatially localized, parts-based subspace representation of visual patterns is learned by local non-negative matrix factorization with a localization constraint (LNMF)~\cite{Li2001}. In another study, multiple hidden sound objects from a single channel auditory scene in the magnitude spectrum domain can be extracted by NMF with KL divergence~\cite{Smaragdis2004}. In addition, two speakers in a single channel recording can be separated by NMF with KL divergence and $L_1$ regularization on $F$~\cite{Schmidt2007speech}.

However, the existing algorithms for NMF with KL divergence~(NMF-KL) are extremely time-consuming for large count sparse datasets. Originally, Lee and Seung, 2001~\cite{Lee2001} proposed the first multiple update iterative algorithm based on gradient methods for NMF-KL. Nevertheless, this technique is simple and ineffective because it requires a large number of iterations, and it ignores negative effects of nonnegative constraints. In addition, gradient methods have slow convergence for complicated logarithmic functions like KL divergence. Subsequently, Cho-Jui \& Inderjit, 2011~\cite{Hsieh2011} proposed a cycle coordinate descent algorithm having low complexity of one variable update. However, this method contains several limitations: first, it computes and stores the dense product matrix $W^TF$ although $V$ is sparse; second, the update of $W^TF$ for the change of each cell in $W$ and $F$ is considerably complicated, which leads to practical difficulties in parallel and distributed computation; finally, the sparsity of data is not considered, while large datasets are often highly sparse.

In comparison with NMF with Frobenius norm, NMF with KL divergence is much more complicated because updating variables will influence derivatives of other variables; this computation is extremely expensive. Hence, it is difficult to employ fast algorithms having multiple variable updates, which limits the number of effective methods.

In this paper, we propose a new advanced version of coordinate descent methods with significant modifications for large sparse datasets. Regarding the contributions of this paper, we:
\begin{itemize}
	\item Propose a fast sparse randomized coordinate descent algorithm using limited internal memory for nonnegative matrix factorization for huge sparse datasets, the full matrix of which can not stored in the internal memory. In this optimization algorithm, variables are randomly selected with uniform sampling to balance the order priority of variables. Moreover, the proposed algorithm effectively utilizes the sparsity of data, models and representation matrices to improve its performance. Hence, the proposed algorithm can be considered an advanced version of cycle coordinate descent for large sparse datasets proposed in~\cite{Hsieh2011}. 
	\item Design parallel algorithms for combinational variants of $L_1$ and $L_2$ regularizations.
	\item Indicate that the proposed algorithm using limited memory can fast attain sparse models, sparse representation, and fast convergence by evaluational experiments, which is a significant milestone in this research problem for large sparse datasets.
\end{itemize}

The rest of the paper is organized as follows. Section~\ref{sec:algorithm} presents the proposed algorithms. The theoretical analysis of convergence and complexity is discussed in Section~\ref{sec:analysis}. Section~\ref{sec:evaluation} shows the experimental results, and Section~\ref{sec:conclusion} summarizes the main contributions of this pape and discussion.

\section{Proposed Algorithm}\label{sec:algorithm}

In this section, we propose a fast sparse randomized coordinate descent parallel algorithm for nonnegative sparse matrix factorization on Kullback-Leibler divergence. 
We employ a multiple iterative update algorithm like EM algorithm, see Algorithm~\ref{algo:IMU}, because $D(V\|W^TF)$ is a non-convex function although it is a convex function when fixing one of two matrices $W$ and $F$. This algorithm contain a \textit{while} loop containing two main steps: the first one is to optimize the objective function by $F$ when fixing $W$; and the another one is to optimize the objective function by $W$ when fixing $F$. Furthermore, 
in this algorithm, we need to minimize Function~\ref{eq:obj}, the decomposed elements of which can be independently optimized in Algorithm~\ref{algo:IMU}:

{\small
\begin{equation}\label{eq:obj}
D(V\|W^TF) = \sum\limits_{i=1}^{m} D(V_i\|W^TF_i) = \sum\limits_{j=1}^{n} D(V^T_j\|F^TW_j) 
\end{equation}
}

Specially, a number of optimization problems $D(V_i\|W^TF_i)$  or $D(V^T_j\|F^TW_j)$ in Algorithm~\ref{algo:IMU} with the form $D(v\|Ax)$ can be independently and simultaneously solved by Algorithm~\ref{algo:sparse}. In this paper, we concern combinational variants of NMF KL divergence with $L_1$ and $L_2$ regularizations in the general formula, Function~\ref{eq:kl}:

\begin{algorithm2e}[h]
	\caption{Iterative multiplicative update}
	\label{algo:IMU} 
	\KwIn{$V \in \mathcal{R}_+^{n\times m}, r$, and $\alpha_1, \alpha_2, \beta_1, \beta_2 \geq 0$}
	\KwOut{$W \in \mathcal{R}_+^{n \times r}$, $F \in \mathcal{R}_+^{r \times m}$.}
	\Begin{
		Randomize $W \in \mathcal{R}_+^{r \times n}$\;
		Randomize $F \in \mathcal{R}_+^{r \times m}$\;
		\While{convergence condition is not satisfied}{
			$ids =$ a randomized ordered set of values $\{1, 2, ..., r\}$\;
			$sumW = W \text{\textbf{1}}^n$\;
			/*Optimizing the objective function by $F$ when fixing $W$*/\;
			\For{j = 1 to m}{
				/*Call Algorithm~\ref{algo:sparse} in parallel*/\;
				$F_j^{k+1} = $ Algorithm~\ref{algo:sparse} ($V_j$, $W^T$, $sumW$, $F_j^{k}$, $\alpha_2$, $\beta_2$, $ids$)
			}
			$sumF = F \text{\textbf{1}}^m$\;
			/*Optimizing the objective function by $W$ when fixing $F$*/\;
			\For{i = 1 to n}{
				/*Call Algorithm~\ref{algo:sparse} in parallel*/\;
				$W_i^{k+1} = $  Algorithm~\ref{algo:sparse} ($V^T_i$ $F^T$, $sumW$, $W_i^{k}$, $\alpha_2$, $\beta_2$, $ids$)\;
			}
		}
		\Return{$(W^{k+1})^T$, $F^{k+1}$}\;
	}
\end{algorithm2e}

{\small
\begin{equation} \label{eq:kl}
f(x) = D(v\|Ax) =  \sum\limits_{i=1}^{n} (v_i\log{\frac{v_i}{[Ax]_i}}-v_{i}+[Ax]_i) + \frac{\alpha}{2} \|x\|^2_2 + \beta |x|_1
\end{equation}
}
\noindent where $v \in \mathcal{R}^n_+, A \in \mathcal{R}^{n \times r}_+, x \in \mathcal{R}^r_+$

Because the vector $v$ is given, minimizing Function~\ref{eq:kl} is equivalent to minimizing Function~\ref{eq:kloptimize}:

{\small
\begin{equation} \label{eq:kloptimize}
f(x) = D(v\|Ax) =  \sum\limits_{i=1}^{n} (-v_i\log{[Ax]_i}+[Ax]_i) + \frac{\alpha}{2} \|x\|^2_2 + \beta |x|_1
\end{equation}
}

From Equation~\ref{eq:kloptimize}, the first and second derivative of the variable $x_k$ are computed by Formula~\ref{eq:Partial}:

{\small
\begin{equation} \label{eq:Partial}
\Rightarrow \left\{  \begin{array}{ll}
\nabla f_k  & = - \sum\limits_{i=1}^{n} v_i\frac{A_{ik}}{[Ax]_i} + \sum\limits_{i=1}^{n} A_{ik} + \alpha x_k + \beta \\
\nabla^2 f_{kk}  & = \sum\limits_{i=1}^{n} v_i (\frac{A_{ik}}{[Ax]_i})^2 + \alpha \\
\end{array}\right.
\end{equation}
}

Based on Formula~\ref{eq:Partial}, we have several significant remarks:
\begin{itemize}
	\item One update of $x_k$ changes all elements of $Ax$, which are under the denominators of fractions. Hence, it is difficult to employ fast algorithms having simultaneous updates of multiple variables because it will require heavy computation. Hence, we employ coordinate descent methods to reduce the complexity of each update, and to avoid negative effects of nonnegative constraints.
	\item One update of $x_k$ has complexity of maintaining $\nabla f_k$ and $\nabla^2 f_{kk}$ as $\mathcal{O}(k+\text{nnz}(v)+\text{nnz}(v)) = k + \text{nnz}(v)$ if $\sum_{i=1}^{n} A_{ik}$ is computed in advance. Specially, it employs $\mathcal{O}(k+\text{nnz}(v))$  of multiple and addition operators, and exactly $\mathcal{O}(\text{nnz}(v))$ of divide operators; where $\text{nnz}(v)$ is the number of non-zero elements in the vector $v$. Hence, for sparse datasets, the number of operators can be negligible.
	\item The used internal memory of Algorithm~\ref{algo:IMU} and Algorithm~\ref{algo:sparse} is $\mathcal{O}(\text{nnz}(V)+\text{size}(W)+ \text{size}(F))=\text{nnz}(V)+(n+m)r$, where $\text{nnz}(V)$ is the number of non-zero elements in the given matrix $V$, which is much smaller than $\mathcal{O}(mn+(n+m)r)$ for the existing algorithms~\cite{Hsieh2011,Lee2001}.
\end{itemize}

Hence, Algorithm~\ref{algo:sparse} employs a coordinate descent algorithm based on projected Newton methods with quadratic approximation in Algorithm~\ref{algo:sparse} is to optimize Function~\ref{eq:kloptimize}. Specially, because Function~\ref{eq:kloptimize} is convex, a coordinate descent  algorithm based on projected Newton method~\cite{Luenberger2008linear,Luenberger2008linear} with quadratic approximation is employed to iteratively update  with the nonnegative lower bound as follows:
\begin{equation*}
x_k = \max(0, x_k - \frac{\nabla f_k}{\nabla^2 f_{kk}})
\end{equation*}

Considering the limited internal memory and the sparsity of $x$, we maintain $W^TF_j$ via computing $Ax$ instead of storing the dense matrix $W^TF$ for the following reasons:
\begin{itemize}
	\item The internal memory requirement will significantly decrease, so the proposed algorithm can stably run on limited internal memory machines.
	\item The complexity of computing $Ax$ is always smaller than the complexity of computing and maintaining  $\nabla f_k$ and $\nabla^2 f_{kk}$, so it does not cause the computation more complicated.
	\item The updating $Ax = Ax + \triangle x A_k$ as the adding with a scale of two vectors utilizes the speed of CPU cache because of accessing consecutive memory cells.
	\item The recomputing helps remove the complexity of maintaining the dense product  matrix $W^TF$ as in~\cite{Hsieh2011,Lee2001}, which is certainly considerable because this maintenance accesses memory cells far together and does not utilize CPU cache.
\end{itemize}

\begin{algorithm2e}[h]
	\caption{Randomized coordinate descent algorithm for sparse datasets}
	\label{algo:sparse} 
	\KwIn{$v \in \mathcal{R}^n$, $A \in \mathcal{R}^{n \times k}$, $sumA$, $x \in \mathcal{R}^k$, $\alpha \geq 0$, $\beta \geq 0$, and variable order $ids$}
	\KwOut{$x$ is updated by $x \approx \argmini{x \succeq 0} \sum\limits_{i=1}^{n} -v_i\log([Ax]_i+\epsilon) + [Ax]_i + \frac{\alpha}{2}\|x\|_2^2 + \beta\|x\|_1$.}
	\Begin{
		Compute $Ax \in \mathcal{R}^n$\label{line:axAx}\; 
		\For{$k$ in order ids}{ 
			Compute $\nabla f_k$ and $\nabla^2 f_{kk}$\ based on $\alpha, \beta, v, x, Ax, sumA$ and sparsity of $v$ based on Formula~\ref{eq:Partial}\; \label{line:dfddf}
			\While{$(\nabla f_k < -\epsilon)$ or ($|\nabla f_k| > \epsilon$  and $x_k > \epsilon$)}{
				$\triangle x = max(0, x_k - \frac{\nabla f_k}{\nabla^2 f_{kk}}) - x_k$\;
				Update $Ax$ via $Ax = Ax + \triangle x A_k$\;
				$xs = x_k$\;
				$x_k = x_k + \triangle x$\;
				\If{$(\triangle x < \epsilon_x xs)$}{break;}
				Update $\nabla f_k$ and $\nabla^2 f_{kk}$ based on $\alpha, \beta, v, x, sumA$ and sparsity of $v$ based on Formula~\ref{eq:Partial}\; \label{line:redfddf}
			}
		}
		\Return{$x$}\;
	}
\end{algorithm2e}

In summary, in comparison with the original coordinate algorithm~\cite{Hsieh2011} for NMK-KL, the proposed algorithm involve significant improvements as follows: 
\begin{itemize}
	\item Randomize the order of variables to optimize the objective function in Algorithm~\ref{algo:sparse}. Hence, the proposed algorithm can balance the order priority of variables,
	\item Remove duplicated computation of maintaining derivatives $\nabla f_k$ and $\nabla^2 f_{kk}$  by computing common elements $sumW = W \text{\textbf{1}}^n$ and $sumF = F \text{\textbf{1}}^m$ in advance, which led to that the complexity of computing $\nabla f_k$ and $\nabla^2 f_{kk}$ only depends on the sparsity of data,
	\item Effectively utilize the sparsity of $W$ and $F$ to reduce the running time of computing $Ax$, 
	\item Effectively utilize CPU cache to improve the performance of maintaining $Ax = Ax + \triangle x A_k$,
	\item Recompute $Ax$ but remove the maintenance of the dense matrix product $W^TF$. Hence, the proposed algorithm stably run on the limited internal memory systems with the required memory size $\mathcal{O}(\text{nnz}(V)+\text{size}(W)+ \text{size}(F)) = \text{nnz}(V)+(m+n)r$, which is much smaller than $\mathcal{O}(mn+(n+m)r)$ for the existing algorithms~\cite{Hsieh2011,Lee2001}.
\end{itemize}

\section{Theoretical Analysis}\label{sec:analysis}

In this section, we analyze the convergence and complexity of Algorithm~\ref{algo:IMU} and Algorithm~\ref{algo:sparse}. 

In comparison with the previous algorithm of Hsieh \& Dhillon, 2011~\cite{Hsieh2011}, the proposed algorithm has significant  modifications for large sparse datasets  by means of adding the order randomization of indexes and utilizing the sparsity of data $V$, model $W$,  and representation $F$. These modifications does not affect on the convergence guarantee of algorithm. Hence, based on Theorem 1 in Hsieh \& Dhillon, 2011~\cite{Hsieh2011}, Algorithm~\ref{algo:sparse} converges to the global minimum of $f(x)$. Furthermore, based on Theorem 3 in Hsieh \& Dhillon, 2011~\cite{Hsieh2011}, Algorithm~\ref{algo:IMU} using Algorithm~\ref{algo:sparse} will converge to a stationary point. In practice, we set $\epsilon_x = 0.1$ in Algorithm~\ref{algo:sparse}, which is more precise than $\epsilon_x = 0.5$  in Hsieh \& Dhillon, 2011~\cite{Hsieh2011}.

Concerning the complexity of Algorithm~\ref{algo:sparse}, based on the remarks in Section~\ref{sec:algorithm}, we have Theorem~\ref{theo:complexity}. Furthermore, because KL divergence is a convex function over one variable and the nonnegative domain, and project Newton methods with quadratic approximation for convex functions have superlinear rate of convergence~\cite{Bertsekas1982projected,Luenberger2008linear}, the average number of iterations $\bar{t}$ is small.

\begin{theoremx}\label{theo:complexity}
	The complexity of Algorithm~\ref{algo:sparse} is $\mathcal{O}(n\text{nnz}(r)+\bar{t}r(r+n+\text{nnz}(n)))$, where $\text{nnz}(r)$ is the number of non-zero elements in $x$, $\text{nnz}(n)$ is the number of non-zero elements in $v$, and $\bar{t}$ is the average number of iterations. Then, the complexity of a \textit{while} iteration in Algorithm~\ref{algo:IMU} is $\mathcal{O}(\bar{t}(mnr + (m+n)r^2))$
\end{theoremx}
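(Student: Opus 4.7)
The plan is to establish Theorem~\ref{theo:complexity} by first bounding the per-call cost of Algorithm~\ref{algo:sparse} and then summing the costs of all calls made inside one while iteration of Algorithm~\ref{algo:IMU}.

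For Algorithm~\ref{algo:sparse}, I would proceed line by line, leveraging the observations made in Section~\ref{sec:algorithm}: the sparsity of $v$, the precomputation of $sumA$, and the choice to maintain $Ax$ rather than the dense product $W^TF$. The initial computation of $Ax$ at line~\ref{line:axAx} is handled in $\mathcal{O}(n \cdot \text{nnz}(r))$ by iterating only over the columns $A_{:,k}$ with $x_k \ne 0$, each contributing $n$ additions into $Ax$. Next, the outer for loop runs $r$ times; for each chosen coordinate $k$, the first evaluation of $\nabla f_k$ and $\nabla^2 f_{kk}$ (line~\ref{line:dfddf}) uses $sumA[k]$ to read $\sum_i A_{ik}$ in $\mathcal{O}(1)$ and restricts the data-dependent sum $\sum_i v_i A_{ik}/[Ax]_i$ to the support of $v$, costing $\mathcal{O}(\text{nnz}(n))$. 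The subsequent while loop runs, by assumption, $\bar{t}$ iterations on average; in each iteration the update $Ax \leftarrow Ax + \triangle x \, A_k$ costs $\mathcal{O}(n)$, the coordinate update is $\mathcal{O}(1)$, and the refreshed gradient and Hessian at line~\ref{line:redfddf} again cost $\mathcal{O}(\text{nnz}(n))$. Aggregating yields $\mathcal{O}\!\left(n\cdot\text{nnz}(r) + r\cdot\text{nnz}(n) + r\bar{t}(n+\text{nnz}(n))\right)$, which I would then relax to the stated $\mathcal{O}(n\cdot\text{nnz}(r) + \bar{t}r(r+n+\text{nnz}(n)))$ by using $\bar{t}\ge 1$ and absorbing a per-outer-iteration overhead of $\mathcal{O}(r)$ (coordinate indexing in $ids$, accumulator reinitialization, and reading the column header for $A_k$) into the $r$ summand inside the parenthesis.

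For the while iteration of Algorithm~\ref{algo:IMU}, I would sum the $m+n$ calls to Algorithm~\ref{algo:sparse}: $m$ calls with $A=W^T\in\mathcal{R}^{n\times r}$ and $v=V_j\in\mathcal{R}^n$, and $n$ calls with $A=F^T\in\mathcal{R}^{m\times r}$ and $v=V^T_i\in\mathcal{R}^m$. Substituting into the per-call bound, the $F$-update phase contributes $\mathcal{O}(mn\cdot\text{nnz}(r) + m\bar{t}r(r+n+\text{nnz}(n)))$ and the $W$-update phase contributes $\mathcal{O}(nm\cdot\text{nnz}(r) + n\bar{t}r(r+m+\text{nnz}(m)))$. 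Using the monotone bounds $\text{nnz}(r)\le r$, $\text{nnz}(n)\le n$, $\text{nnz}(m)\le m$, the data-proportional terms $m\bar{t}rn$ and $n\bar{t}rm$ collapse to $\mathcal{O}(\bar{t}mnr)$, and the regularization-proportional terms $m\bar{t}r^2$ and $n\bar{t}r^2$ collapse to $\mathcal{O}(\bar{t}(m+n)r^2)$; the $mn\cdot\text{nnz}(r)$ term is absorbed into $\bar{t}mnr$ since $\bar{t}\ge 1$. This yields the stated bound $\mathcal{O}(\bar{t}(mnr+(m+n)r^2))$.

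The main obstacle will be pinning down the extra $r$ inside the factor $(r+n+\text{nnz}(n))$ of the per-call bound, which does not drop out of a naive line-by-line accounting that already exploits the sparsity of $v$ and the precomputed $sumA$. I would argue that it corresponds to the amortized bookkeeping per outer iteration (dereferencing the next index in $ids$, updating $x_k$, accessing $A_k$'s structure, and the one-time setup cost of $\mathcal{O}(\text{nnz}(n))$ absorbed via $\bar{t}\ge 1$), and once this fold-in is justified, the remainder is a routine aggregation using monotone bounds on $\text{nnz}$.
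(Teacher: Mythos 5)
Your proposal follows essentially the same route as the paper: a line-by-line cost accounting of Algorithm~\ref{algo:sparse} (initial $Ax$ in $\mathcal{O}(n\,\text{nnz}(r))$, derivative evaluation in $\mathcal{O}(r+\text{nnz}(n))$, and $\bar{t}$ inner updates at $\mathcal{O}(r+n+\text{nnz}(n))$ each), followed by summing the $m+n$ calls inside one \textit{while} iteration and absorbing lower-order terms. The only detail the paper includes that you omit is the $\mathcal{O}((m+n)r)$ cost of precomputing $sumW$ and $sumF$ (dominated anyway), while your handling of the residual $r$ term and of the $W$-phase with dimensions $m$ and $\text{nnz}(m)$ is actually more explicit than the paper's, which simply asserts the $\mathcal{O}(r+\text{nnz}(n))$ bound and folds both phases into one expression.
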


\begin{proof}
Consider the major computation in Algorithm~\ref{algo:sparse}, based on Formula~\ref{eq:Partial}, we have:
\begin{itemize}
	\item The complexity of  computing $Ax$ in Line~\ref{line:axAx} is $\mathcal{O}(n\text{nnz}(r))$,
	\item The complexity of  computing $\nabla f_k$ and $\nabla^2 f_{kk}$ in Line~\ref{line:dfddf} is $\mathcal{O}(r+\text{nnz}(n))$ because $Ax$ and $sumA$ are computed in advance,
	\item The complexity of  updating $\nabla f_k$ and $\nabla^2 f_{kk}$ in Line~\ref{line:redfddf} is $\mathcal{O}(r+n+\text{nnz}(n))$ because only one dimension of vector $x$ is changed.
\end{itemize}

Hence, 	the complexity of Algorithm~\ref{algo:sparse} is $\mathcal{O}(n\text{nnz}(r)+\bar{t}r(r+n+\text{nnz}(n)))$.

In addition, the complexity of computing $sumW$ and $sumF$ is  $\mathcal{O}((m+n)r)$. Hence, the complexity of a \textit{while} iteration in Algorithm~\ref{algo:IMU} is $\mathcal{O}((m+n)r+mn\text{nnz}(r)+\bar{t}mr(r+n+\text{nnz}(n))) \approx (m+n)r+\bar{t}(mnr + (m+n)r^2) \approx \bar{t}(mnr + (m+n)r^2)$
Therefore, we have Theorem~\ref{theo:complexity}
\end{proof}

For large sparse datasets, $m, n \gg r \Rightarrow \mathcal{O}(\bar{t}(mnr + (m+n)r^2)) \approx \bar{t}(mnr)$. This complexity is raised by the operators $Ax = Ax + \triangle x A_k$ in Algorithm~\ref{algo:sparse}. To reduce the running time of these operators, $A_k$ must be stored in an array to utilize CPU cache memory by accessing continuous memory cells of $Ax$ and $A_k$.

\section{Experimental Evaluation}\label{sec:evaluation}

In this section, we investigate the effectiveness of the proposed algorithm via convergence and sparsity. Specially, we compare the proposed algorithm Sparse Randomized Coordinate Descent (\textbf{SRCD}) with state-of-the-art algorithms as follows:
\begin{itemize}
	\item Multiplicative Update (\textbf{MU})~\cite{Lee2001}: This algorithm is the original method for NMF with KL divergence.
	\item Cycle Coordinate Descent (\textbf{CCD})~\cite{Hsieh2011}: This algorithm has the current fastest convergence because it has very low complexity of each update for one variable.
\end{itemize}

\textbf{Datasets:} To investigate the effectiveness of the algorithms compared, the 4 sparse datasets used are shown in Table~\ref{tab:datasets}. The dataset Digit is downloaded from~\footnote{\url{http://yann.lecun.com/exdb/mnist/}}, and the other tf-idf datasets Reuters21578, TDT2, and RCV1\_4Class are downloaded from~\footnote{\url{http://www.cad.zju.edu.cn/home/dengcai/Data/TextData.html}}.

\begin{table}
	\caption{Summary of datasets}
	\label{tab:datasets}
	\centering
	{\begin{tabular}{lcccc} 
			\hline\noalign{\smallskip}
			Dataset ($V$) & $n$ & $m$ & $\text{nnz}(V)$ & Sparsity (\%) \\
			\noalign{\smallskip}\hline\noalign{\smallskip}
			Digits 			 & $784$ & $60,000$  & $8,994,156$ & $80.8798$\\
			Reuters21578 & $8,293$ & $18,933$  & $389,455$ & $99.7520$\\
			TDT2 	& $9,394$ & $36,771$  & $1,224,135$ & $99.6456$\\
			RCV1\_4Class & $9,625$ & $29,992$ & $730,879$ & $99.7468$ \\
			\noalign{\smallskip}\hline
		\end{tabular}}
	\end{table}

	\textbf{Environment settings}: We develop the proposed algorithm SRCD in Matlab with embedded code C++ to compare them with other algorithms. We set system parameters to use only 1 CPU for Matlab and the IO time is excluded in the machine Mac Pro 8-Core Intel Xeon E5 3 GHz 32GB. In addition, the  initial matrices $W^0$ and $F^0$ are set to the same values. The source code will be published on our homepage~\footnote{\url{http://khuongnd.appspot.com/}}.
	
\subsection{Convergence}

	In this section, we investigate the convergence of the objective value $D(V||W^TF)$ versus running time by running all the compared algorithms on the four datasets with two numbers of latent factors $r = 10$ and $r = 20$. The experimental results are depicted in Figure~\ref{fig:nmfkl_convergence10} and Figure~\ref{fig:nmfkl_convergence20}. From these figures, we realize two significant observations as follows:
	\begin{itemize}
		\item The proposed algorithm (SRCD) has much faster convergence than the algorithms CCD and MU,
		\item The sparser the datasets are, the greater the distinction between the convergence of the algorithm SRCD and the other algorithms CCD and MU is. Specially, for Digits with 81\% sparsity, the algorithm SRCD's convergence is lightly faster than the convergence of the algorithms CCD and MU. However, for three more sparse datasets Reuters21578, TDT2, and RCV1\_4Class with above 99\% sparsity, the distance between these convergence speeds is readily apparent.
	\end{itemize}
	
	\begin{figure}
		\centering{
			\hspace*{-10pt}
			\includegraphics[scale=0.68]{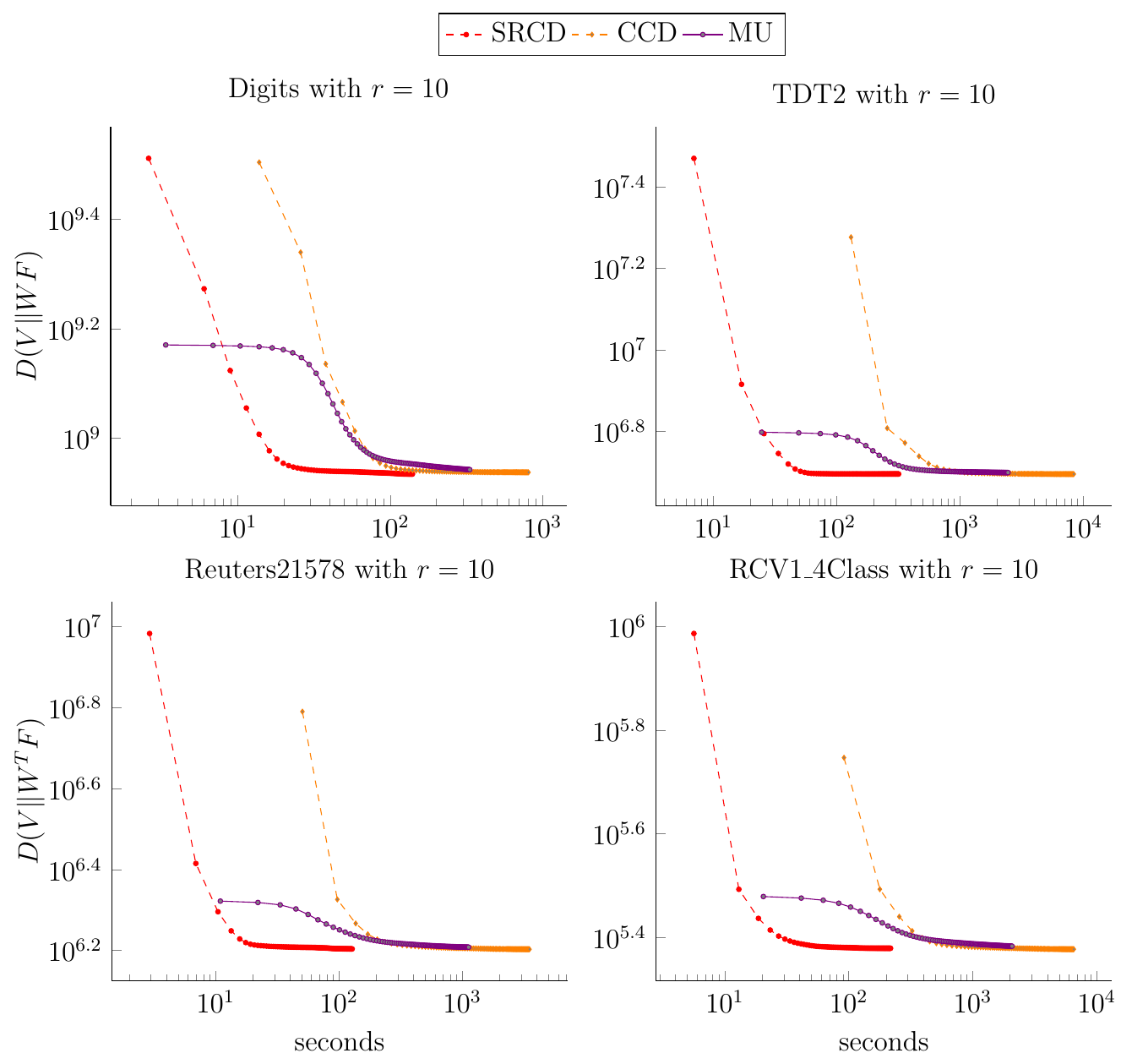}
			\caption{Objective value $D(V\|W^TF)$ versus running time with $r = 10$}
			\label{fig:nmfkl_convergence10}}
	\end{figure}
	
	\begin{figure}
		\centering{
			\hspace*{-10pt}
			\includegraphics[scale=0.68]{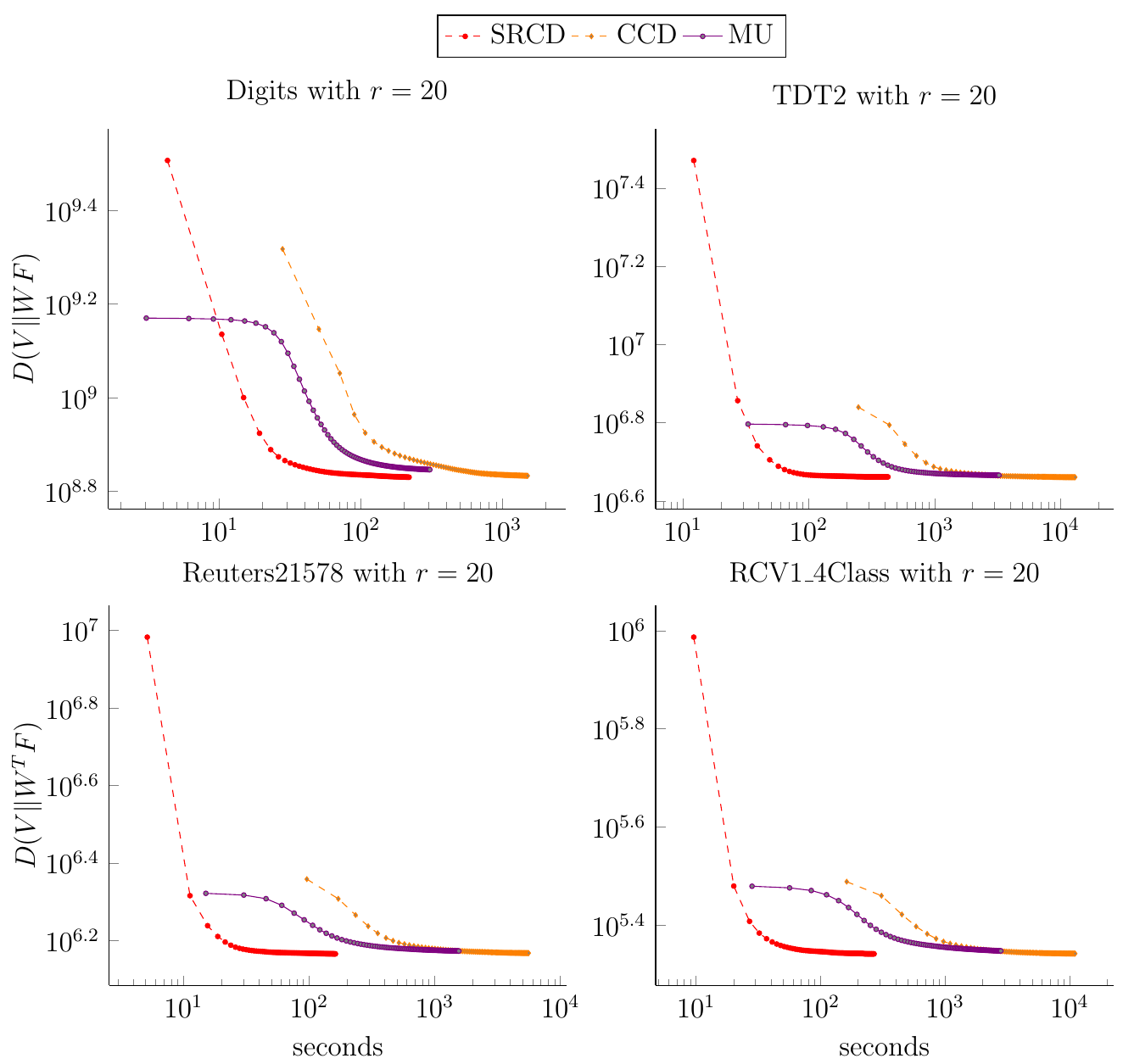}
			\caption{Objective value $D(V\|W^TF)$ versus running time with $r = 20$}
			\label{fig:nmfkl_convergence20}}
	\end{figure}
	
\subsection{Sparsity of factor matrices}

Concerning the sparsity of factor matrices $W$ and $F$, the algorithms CCD and MU does not utilize the sparsity of factor matrices. Hence, these algorithms add a small number into these factor matrices to obtain convenience in processing special numerical cases. Hence, the sparsity of factor matrices $W$ and $F$ for the algorithms CCD and MU both are 0\%. Although this processing may not affect other post-processing tasks such as classification and information retrieval, it will reduce the performance of these algorithms. The sparsity of $(W, F)$ of the proposed algorithm's results is showed in Table~\ref{tab:sparsity}. These results clearly indicate that the sparse model $W$ and the sparse representation $F$ are attained. The results also explain why the proposed algorithm runs very fast on the sparse datasets Reuters21578, TDT2 and RCV1\_4Class, when it can obtain highly sparse models and sparse representation in these highly sparse datasets.

\begin{table}
	\caption{Sparsity (\%) of $(W, F)$ for the algorithm SRCD's results}
	\label{tab:sparsity}
	\centering
	{\begin{tabular}{lcccc} 
			\hline\noalign{\smallskip}
			\   & Digits & Reuters21578 & TDT2 & RCV1\_4Class\\
			\noalign{\smallskip}\hline\noalign{\smallskip}
			$r = 10$  & (74.3, 49.2)& (75.6, 71.6) & (68.5, 71.3)& (81.2, 74.0) \\
			$r = 20$  & (87.8, 49.7)& (84.2, 80.4)& (78.6, 81.1)& (88.4, 83.0)\\
			\noalign{\smallskip}\hline
		\end{tabular}}
	\end{table}

\subsection{Used internal memory}

\begin{table}
	\caption{Used internal memory (GB) for $r = 10$}
	\label{tab:nmfkl_memory}
	\centering
	{\begin{tabular}{lccc} 
			\hline\noalign{\smallskip}
			Datasets &	MU &	CCD	& SRCD \\
			\noalign{\smallskip}\hline\noalign{\smallskip}
			Digits	& 1.89 & \emp{0.85} &	1.76\\
			Reuters21578 &	5.88 &	2.46 &	\emp{0.17}\\
			TDT2 & 11.51 & 5.29 & \emp{0.30}\\
			RCV1\_4Class &	9.73	& 4.43 & \emp{0.23}\\
			\noalign{\smallskip}\hline
		\end{tabular}}
	\end{table}
	
Table~\ref{tab:nmfkl_memory} shows the internal memory used by algorithms. From the table, we have two significant observations:
	\begin{itemize}
		\item For the dense dataset Digits, the proposed algorithm SRCD uses more internal memory than the the algorithm CCD because a considerable amount of memory is used for the indexing of matrices. 
		\item For the sparse datasets Reuters21578, TDT2, and RCV1\_4Class, the internal memory for SRCD is remarkably smaller than the internal one for MU and CCD. These results indicate that we can conduct the proposed algorithm for huge sparse datasets with a limited internal memory machine is stable. 
	\end{itemize}

\subsection{Running on large datasets}

This section investigates running the proposed algorithm on large datasets with different settings. Figure~\ref{fig:runningtime_rs} shows the running time of Algorithm SRCD for 100 iterations with different number of latent component using 1 thread. Clearly, the running time linearly increases, which fits the theoretical analyses about fast convergence and linear complexity for large sparse datasets in Section~\ref{sec:analysis}. Furthermore, concerning the parallel algorithm, the running time of Algorithm SRCD for 100 iterations significantly decreases when the number of used threads increases in Figure~\ref{fig:parallel_NMFKL}. In addition, the running time is acceptable for large applications. Hence, these results indicate that the proposed algorithm SRCD is feasible for large scale applications.

	\begin{figure}
		\centering
		\includegraphics[scale=0.6]{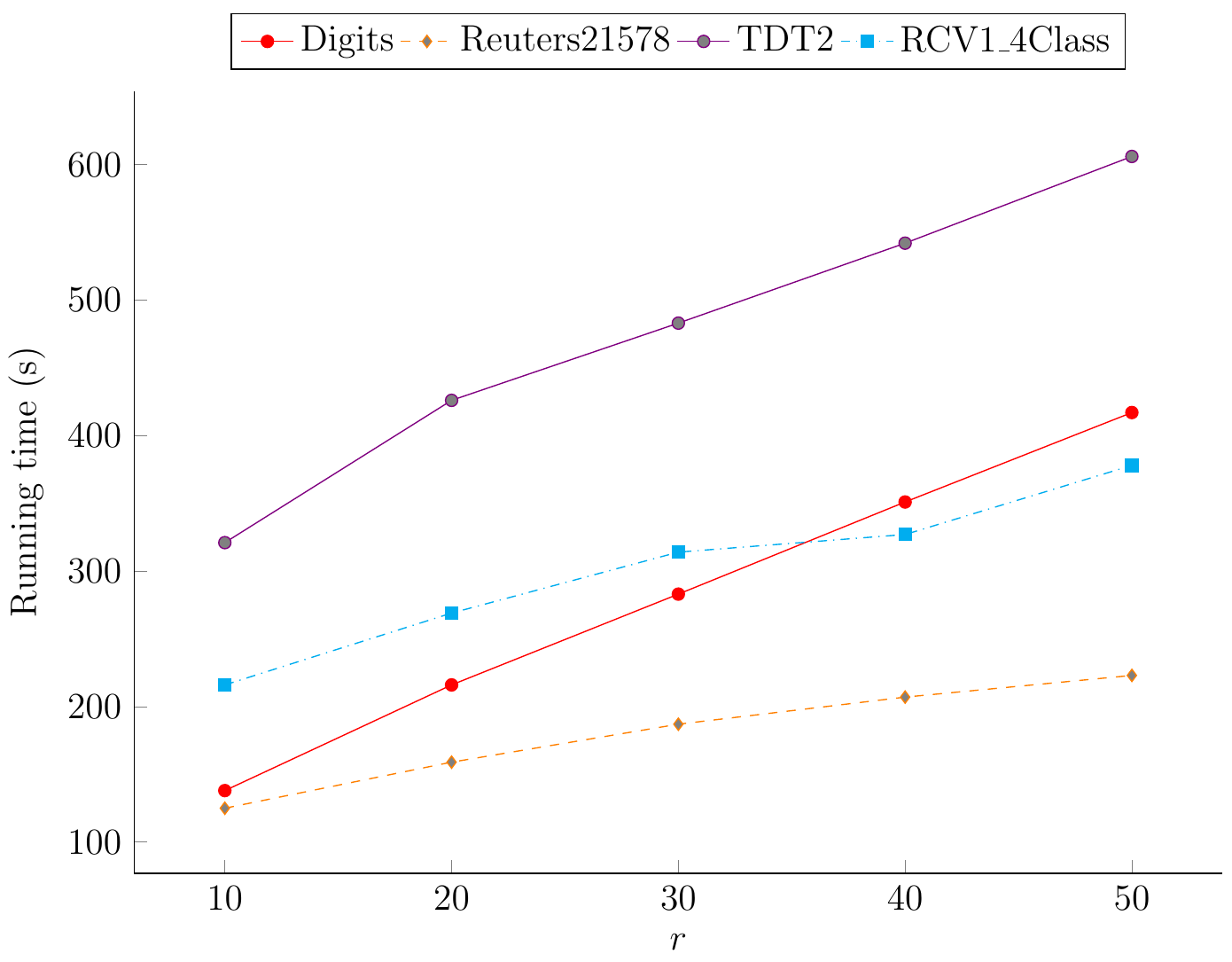}
		\caption{Running time of 100 iterations with different number of latent component using 1 thread}
		\label{fig:runningtime_rs}
	\end{figure}

	\begin{figure}
		\centering
		\includegraphics[scale=0.6]{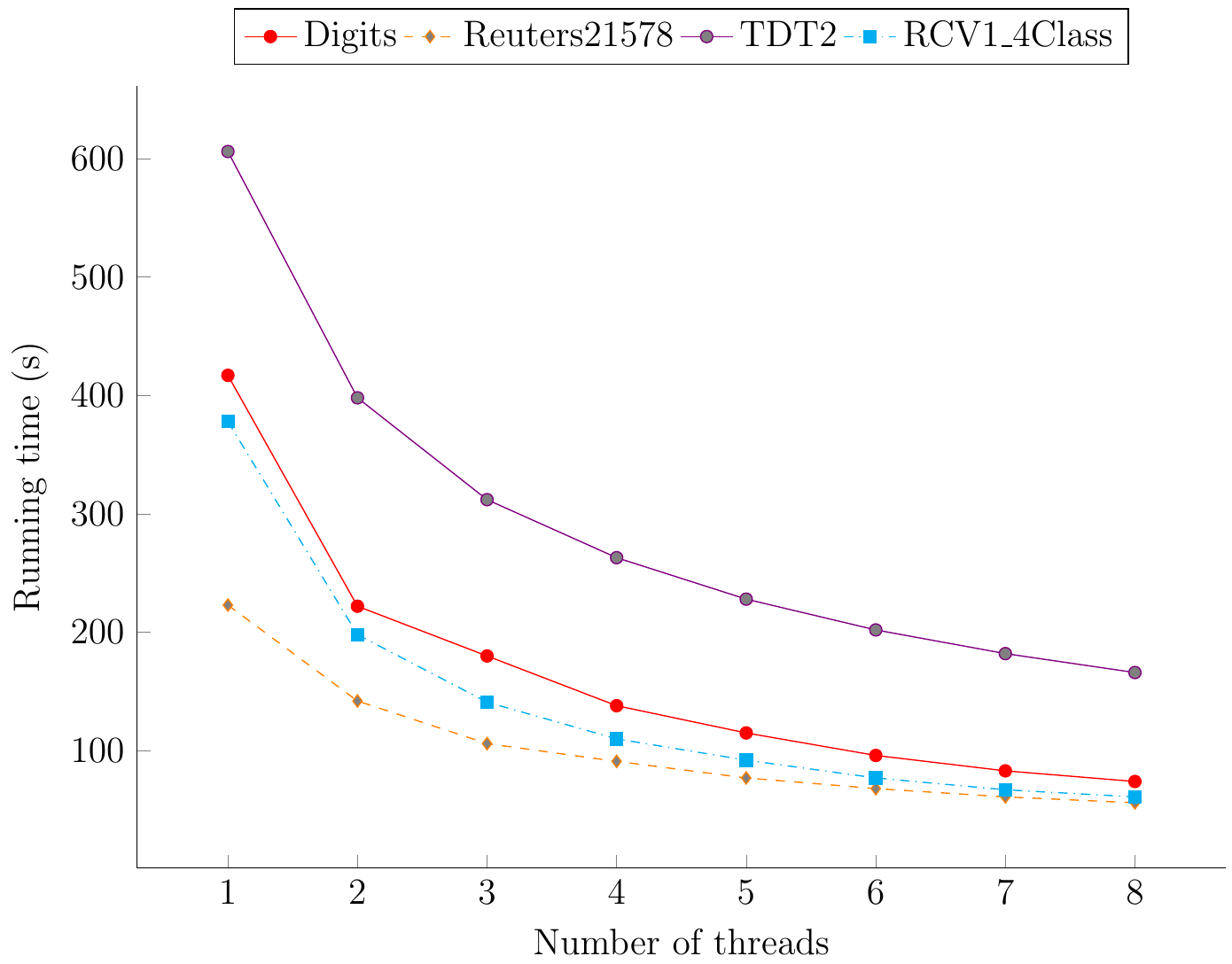}
		\caption{Running time of 100 iterations with $r = 50$ and using different number of threads}
		\label{fig:parallel_NMFKL}
	\end{figure}

\section{Conclusion and Discussion}\label{sec:conclusion}

In this paper, we propose a fast parallel randomized coordinate descent algorithm for NMF with KL divergence for large sparse datasets. The proposed algorithm attains fast convergence by means of removing duplicated computation, exploiting sparse properties of data, model and representation matrices, and utilizing the fast accessing speed of CPU cache. In addition, our method can stably run systems within limited internal memory by reducing internal memory requirements.  Finally, the experimental results indicate that highly sparse models and sparse representation can be attained for large sparse datasets, a significant milestone in researching this problem. In future research, we will generalize this algorithm for nonnegative tensor factorization.

\section*{Acknowledgements}
This work is partially sponsored by Asian Office of Aerospace R\&D under agreement number FA2386-15-1-4006, by funded by Vietnam National University at Ho Chi Minh City under the grant number B2015-42-02m, and by 911 Scholarship from Vietnam Ministry of Education and Training.

\bibliographystyle{abbrv}      
\bibliography{references}   

\end{document}